\newcommand{\R}{\mathbb{R}}
\newcommand{\IR}{\mathbb{IR}}
\newcommand{\KR}{\mathbb{KR}}
\newcommand{\inum}[1]{\mathbf{#1}}
\newcommand{\dual}[1]{\text{\rm dual}(#1)}
\newtheorem{theorem}{Theorem}
\newtheorem{proposition}{Proposition}
\newtheorem{corollary}{Corollary}
\newtheorem{example}{Example}
\begin{document}

\begin{center}
\uppercase{\bf On a Class of Parameterized Solutions to Interval
Parametric Linear Systems}
\bigskip

Evgenija D.\ Popova
\end{center}
\bigskip

\begin{center}
\begin{minipage}[c]{12cm}
\small {\bf Abstract}
\smallskip

Presented is a new method yielding parameterized solution to an
interval parametric linear system. Some properties of this method
are discussed. The solution enclosure it provides is compared to the
enclosures by other methods.
It is shown that an application, proposed by other authors, cannot be done in the general case. \\[-6pt]

{\bf Key words:} Interval linear systems, dependent data, solution
enclosure.
\smallskip

{\bf 2010 Mathematics Subject Classification:} 65G40, 15A06, 15B99
\end{minipage}
\end{center}
\bigskip

%%%%%%%%%%%%%%%%%%%%%%%%%%%%%%%%%%%%%%%%%%%%%%%%%%%%%%%%%%%%%%%%%%%%%%
{\bf 1. Introduction.} Parameterized solutions to interval
parametric linear systems are linear functions of interval
parameters that estimate the united solution set. Parameterized
solutions present an alternative form of the traditional numerical
interval vectors enclosing the solution set. They allow subsequent
problems involving both the primary solution and the initial
interval parameters to account better for the parameter
dependencies, cf. \cite{Kol2014} \cite{PopovaED:2018ArXiv}. Methods
for deriving parameterized solutions are developed in relation to
many classical interval methods yielding interval vectors enclosing
the united solution set, see
\cite{Kol2014}--\cite{PopovaED:2017:POE} and the references therein
mentioning most of the works on parameterized solutions. Recently,
Kolev presented in \cite{Kolev2018} parameterized analogue of the
generalized method of Neumaier and Pownuk \cite{NePow},
\cite{Pop2014CAMWA} and proposed an application of it. A
methodological framework alternative to \cite{NePow},
\cite{Pop2014CAMWA} was proposed in \cite{PopovaED:2018} and its
advantages for handling a class of interval parametric linear
systems were demonstrated. A new parameterized solution, based on
the methodology of  \cite{PopovaED:2018} and different from the
parameterized form (\ref{x(p,l)}) was proposed in
\cite{PopovaED:2018ArXiv} along with a new application direction.

Present work is motivated by  \cite{Kolev2018}. While the
parameterized solution in  \cite{Kolev2018} is based on affine
arithmetic, in Section 3 of this work we propose a parameterized
solution in form (\ref{x(p,l)}) which is based on the numerical
method in  \cite{PopovaED:2018} and not using affine arithmetic. We
show that the two forms of parameterized solutions related to the
numerical method in \cite{PopovaED:2018} can be applied also to the
numerical methods in \cite{NePow}, \cite{Pop2014CAMWA} without using
affine arithmetic. The methodology in  \cite{PopovaED:2018} has an
expanded scope of applications and provides a sharper solution
enclosure than most of the methods for a wide class of parametric
systems involving rank one uncertainty structure. In this work we
demonstrate that for the latter class of parametric systems, the
proposed here parameterized solution provides sharper solution
enclosure than a variety of parameterized solutions based on affine
arithmetic and compared in \cite{SkalnaHla19}. In Section 4 we
discuss in details and demonstrate by a numerical example that the
application proposed in \cite{Kolev2018} cannot be done to arbitrary
parametric linear systems with rank one uncertainty structure.

%%%%%%%%%%%%%%%%%%%%%%%%%%%%%%%%%%%%%%%%%%%%%%%%%%%%%%%%%%%%%%%
{\bf 2. Preliminaries.} Denote by $\R^{m\times n}$  the set of real
$m\times n$ matrices. Vectors are considered as one-column matrices.
A real compact interval is $\inum{a} = [a^-, a^+] := \{a\in\R \mid
a^-\leq a\leq a^+\}$ and $\IR^{m\times n}$ denotes the set of
interval $m\times n$ matrices. For $\inum{a} = [a^-, a^+]$, define
its mid-point $\check{a}:= (a^- + a^+)/2$,  the radius $\hat{a} :=
(a^+ - a^-)/2$ and the magnitude $|\inum{a}|:= \max\{|a^-|,
|a^+|\}$. These functions are applied to interval vectors and
matrices componentwise. The inequalities are understood
componentwise. The spectral radius of a matrix $A\in\R^{n\times n}$
is denoted by $\varrho(A)$. The identity matrix of appropriate
dimension is denoted by $I$. For $A_k\in\R^{n\times m}$, $1\leq
k\leq t$, $\left(A_1,\ldots,A_t\right)\in\R^{n\times tm}$ denotes
the matrix obtained by stacking the columns of the matrices $A_k$.
Denote the $i$-th column of $A\in\R^{n\times m}$ by $A_{\bullet i}$
and its $i$-th row by $A_{i\bullet}$.

We consider systems of linear algebraic equations with linear
uncertainty structure
\begin{equation} \label{pls}
\begin{split}
&A(p) x \; = \; a(p), \quad p\in\inum{p}\in\IR^K,\\ &A(p) := A_0 +
\sum_{k = 1}^K p_kA_k, \qquad  a(p)  := a_0 + \sum_{k = 1}^K p_ka_k,
\end{split}
\end{equation}
where $A_k\in\R^{n\times n}$, $a_k\in\R^n$, $k = 0,\ldots, K$ and
the parameters $p=(p_1,\ldots$, $p_K)^\top$ are considered to be
uncertain and varying within given non-degenerate\footnote{An
interval $\inum{a}=[a^-, a^+]$ is degenerate if $a^-=a^+$.}
intervals $\inum{p}=(\inum{p}_1,\ldots,\inum{p}_K)^\top$. Nonlinear
dependencies between interval valued parameters in linear algebraic
systems are usually linearized to the form (\ref{pls}) and methods
for the latter are applied to bound the corresponding solution set.
The so-called united parametric  solution set of the system
(\ref{pls}) is defined by $$\Sigma^p_{\rm uni}=\Sigma_{\rm
uni}(A(p),a(p),\inum{p}) := \{x\in\R^n \mid (\exists
p\in\inum{p})(A(p)x=a(p))\}.$$ Usually, the interval methods
(providing interval enclosure of $\Sigma^p_{\rm uni}$) generate
numerical interval vectors that contain $\Sigma^p_{\rm uni}$. A new
type -- parameterized solution -- is proposed in \cite{Kol2014}.
This solution is in form of an affine-linear function of
interval-valued parameters
\begin{equation}\label{x(p,l)}
x(p,r) = \tilde{x} + V(\check{p}-p) + r, \quad p\in\inum{p}, \;
r\in\inum{r}= [-\hat{r},\hat{r}],
\end{equation}
where $\tilde{x}, \hat{r}\in\R^n$, $V\in\R^{n\times K}$. Some
representations move $\tilde{x}$ into the interval vector $\inum{r}$
and consider the parameters $p,r$ varying independently within the
interval $[-1,1]$. The parameterized solution has the property
$\Sigma^p_{uni}\subseteq x(\inum{p},\inum{r})$, where
$x(\inum{p},\inum{r})$ is the interval hull of $x(p,r)$ over
$p\in\inum{p}$, $r\in\inum{r}$. For a nonempty and bounded set
$\Sigma\subset\R^n$, its interval hull is $\square\Sigma := \bigcap
\{\inum{x}\in\IR^n \mid \Sigma\subseteq \inum{x}\}$.

In what follows we consider another form of the parametric system
(\ref{pls}) and some numerical and parameterized solutions related
to this form. Let ${\cal K}=\{1,\ldots,K\}$ and $\pi',\pi''$ be two
subsets of ${\cal K}$ such that $\pi'\cap\pi''=\emptyset$,
$\pi'\cup\pi''={\cal K}$, Card$(\pi')=K_1$. The permutation $\pi'$
denotes the indices of the parameters that appear in both the matrix
and the right-hand side of the system, while $\pi''$ involves the
indices of the parameters that appear only in $a(p)$ in (\ref{pls}).
Denote $p_\pi = (p_{\pi_1},\ldots,p_{\pi_K})$ and by $D_{p_{\pi}}$ a
diagonal matrix with diagonal vector $p_{\pi}$. %
The system (\ref{pls}) has the following equivalent form
\begin{equation}\label{plsG}
\left(A_0+LD_{g(p_{\pi'})}R\right)x = a_0+LD_{g(p_{\pi'})}t +
Fp_{\pi''}, \quad p\in\inum{p},
\end{equation}
where $g(p_{\pi'})\in\R^\gamma$, $\gamma=\sum_{k=1}^{K_1} \gamma_k$,
$g(p_{\pi'}) = \left(g_1^\top (p_{\pi'_1}),\ldots, g_{K_1}^\top
(p_{\pi'_{K_1}})\right)^\top$, $L=\left(L_1,\ldots,
L_{K_1}\right)\in\R^{n\times\gamma}$,
$R=\left(R_1^\top,\ldots,R_{K_1}^\top\right)^\top\in\R^{\gamma\times
n}$ and for $1\leq k\leq K_1$,
$g_k(p_{\pi'_k})=(p_{\pi'_k},\ldots,p_{\pi'_k})^\top\in\R^{\gamma_k}$,
$p_{\pi'_k}A_{\pi'_k} = L_kD_{g_k(p_{\pi'_k})}R_k$, and
$\sum_{k\in\pi'} p_ka_k = LD_{g(p_{\pi'})}t$. We assume that
(\ref{plsG}) provides an equivalent optimal rank one representation
(cf. \cite{PopovaED:2018}) of either $A(p_{\pi'})-A_0$, or of
$A^\top(p_{\pi'})-A_0^\top$.  Every interval parametric linear
system (\ref{pls}) has an equivalent, optimal, rank one
representation (\ref{plsG}) and there are various ways to obtain it,
cf.~\cite{Piziak}, \cite{PopovaED:2018}. The following theorem
presents a method for computing numerical interval enclosure of
$\Sigma^p_{uni}$.

\begin{theorem}[{\cite{PopovaED:2018}}] \label{solution1}
Let (\ref{plsG}) be the equivalent optimal rank one representation
of system (\ref{pls}) and let the matrix $A(\check{p})$ be
nonsingular. Denote $C=A^{-1}(\check{p})$ and
$\check{x}=Ca(\check{p})$. If
\begin{equation}\label{strReg2}
\varrho\left(\left|(RCL)D_{g(\check{p}_{\pi'}-\inum{p}_{\pi'})}\right|\right)<1,
\end{equation}
% then
\begin{itemize}
\item[(i)] $\Sigma_{uni}\left(A(p),a(p), \inum{p}\right)$ and the
solution set $\Sigma_{\rm uni}((\ref{eqY}))$ of system (\ref{eqY})
are bounded
\begin{equation}\label{eqY}
\left(I-RCLD_{g(p_{\pi'})}\right)y = R\check{x} - RCFp_{\pi''} -
RCLD_{g(p_{\pi'})}t, \quad  p\in [-\hat{p},\hat{p}],
\end{equation}
\item[(ii)] $\inum{y}\supseteq\Sigma_{\rm uni}((\ref{eqY}))$
is computable by methods that require (\ref{rhoDelta}) {\rm
(cf.\cite{PopovaED:2018})}
\begin{equation}\label{rhoDelta}
\varrho
(\sum_{i=1}^K\left|\left(A(\check{p})\right)^{-1}A_i\right|\hat{p}_i)<1,
\end{equation}
\item[(iii)]
every $x\in\Sigma_{uni}\left(A(p), a(p), \inum{p}\right)$ satisfies
\begin{equation}\label{xx}
x  \in \check{x} - (CF)[-\hat{p}_{\pi''}, \hat{p}_{\pi''}] +
(CL)\left(D_{g([-\hat{p}_{\pi'},
\hat{p}_{\pi'}])}|\inum{y}-t|\right).
\end{equation}
\end{itemize}
\end{theorem}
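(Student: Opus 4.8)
The plan is to collapse the $n$-dimensional system (\ref{plsG}) onto the $\gamma$-dimensional auxiliary system (\ref{eqY}) by the substitution $y=Rx$, to read off nonsingularity and boundedness from the spectral-radius hypothesis (\ref{strReg2}), and finally to recover (\ref{xx}) by back-substitution and interval evaluation.

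First I would pass to centered parameters $p=\check p+q$, $q\in[-\hat p,\hat p]$. Linearity of $g$ gives $A(p)=A(\check p)+LD_{g(q_{\pi'})}R$ and $a(p)=a(\check p)+LD_{g(q_{\pi'})}t+Fq_{\pi''}$, so left-multiplication by $C=A^{-1}(\check p)$ turns (\ref{plsG}) into $(I+CLD_{g(q_{\pi'})}R)x=\check x+CLD_{g(q_{\pi'})}t+CFq_{\pi''}$. Solving for $x$ yields the back-substitution identity $x=\check x+CFq_{\pi''}-CLD_{g(q_{\pi'})}(Rx-t)$; multiplying it by $R$ and writing $y:=Rx$ gives $(I-RCLD_{g(q_{\pi'})})y=R\check x-RCFq_{\pi''}-RCLD_{g(q_{\pi'})}t$, which is exactly (\ref{eqY}) after the sign change $q\mapsto-q$ that the symmetry of $[-\hat p,\hat p]$ permits. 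In particular $Rx$ solves (\ref{eqY}) whenever $x$ solves (\ref{plsG}).

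For (i) I would factor $A(p)=A(\check p)\left(I+CLD_{g(q_{\pi'})}R\right)$ and use the determinant identity $\det\!\left(I_n+CLD_{g(q_{\pi'})}R\right)=\det\!\left(I_\gamma+RCLD_{g(q_{\pi'})}\right)$. Since $\left|RCLD_{g(q_{\pi'})}\right|\le|RCL|D_{g(\hat p_{\pi'})}$ componentwise for every $q\in[-\hat p,\hat p]$, monotonicity of the spectral radius on nonnegative matrices together with (\ref{strReg2}) gives $\varrho\!\left(RCLD_{g(q_{\pi'})}\right)<1$; hence $\pm1$ are not eigenvalues and both $A(p)$ and $I-RCLD_{g(q_{\pi'})}$ stay nonsingular on the box. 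Thus $x(p)=A^{-1}(p)a(p)$ and the solution $y(q)$ of (\ref{eqY}) exist, are unique and depend continuously on the parameters, so both solution sets, being continuous images of the compact box $\inum{p}$, are bounded. Part (ii) then follows by recognizing (\ref{eqY}) as an interval parametric linear system with midpoint matrix $I_\gamma$, whose united solution set is bracketed by any standard parametric enclosure method under a strong-regularity hypothesis; for (\ref{eqY}) this hypothesis is precisely (\ref{strReg2}), already assumed, while (\ref{rhoDelta}) is its counterpart on the full system (\ref{pls}) in terms of which such methods are usually stated (cf.\ \cite{PopovaED:2018}), which supplies the computable $\inum{y}$.

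Finally, (iii) is obtained by inserting $y=Rx\in\inum{y}$ into the back-substitution identity and enclosing its right-hand side: the symmetric term $CFq_{\pi''}$ lies in $-(CF)[-\hat p_{\pi''},\hat p_{\pi''}]$, while each entry of $D_{g(q_{\pi'})}(y-t)$ is contained in $g([-\hat p_{\pi'},\hat p_{\pi'}])\,(\inum{y}-t)$, whose magnitude is bounded by $g(\hat p_{\pi'})\,|\inum{y}-t|$, producing the last summand of (\ref{xx}). The step I expect to be delicate is precisely this last enclosure: $q$ occurs simultaneously in $D_{g(q_{\pi'})}$ and, through $y=Rx$, inside $Rx-t$, so one cannot simply substitute the full box into both occurrences independently. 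The clean way around it is to note that for the \emph{actual} realization the vector $Rx(p)$ already lies in the valid enclosure $\inum{y}$ of $\Sigma_{\rm uni}((\ref{eqY}))$; relaxing the remaining $q_{\pi'}$ in $D_{g(q_{\pi'})}$ to the whole interval then only widens the result while preserving containment, which is what makes (\ref{xx}) a rigorous outer enclosure. Reconciling the two regularity conditions appearing in (ii), namely (\ref{strReg2}) for the reduced system against (\ref{rhoDelta}) for the full one, is the other point that needs care.
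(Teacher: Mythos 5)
Note first that this paper contains no proof of Theorem \ref{solution1}: it is quoted, with attribution, from \cite{PopovaED:2018}, so your attempt can only be judged against the argument underlying that cited methodology, not against anything in this text. Your reconstruction is correct, and its three pillars are sound: (a) the back-substitution identity $x=\check{x}+CFq_{\pi''}-CLD_{g(q_{\pi'})}(Rx-t)$ for $p=\check{p}+q$, which shows that $y=Rx$ solves (\ref{eqY}) (the sign flip $q\mapsto -q$ being absorbed by the symmetric box $[-\hat{p},\hat{p}]$); (b) Sylvester's identity $\det\left(I_n+CLD_{g(q_{\pi'})}R\right)=\det\left(I_\gamma+RCLD_{g(q_{\pi'})}\right)$ combined with the Perron--Frobenius bound $\varrho\left(RCLD_{g(q_{\pi'})}\right)\le\varrho\left(|RCL|D_{g(\hat{p}_{\pi'})}\right)<1$ (the right-hand quantity being exactly the magnitude appearing in (\ref{strReg2})), which gives nonsingularity of $A(p)$ and of $I-RCLD_{g(q_{\pi'})}$ over the whole box and hence boundedness of both solution sets in (i); and (c) for (iii), the containment-only relaxation in which the exact realization $Rx\in\inum{y}$ and the diagonal factor $D_{g(q_{\pi'})}$ are enclosed independently, which is legitimate precisely because only an outer enclosure is asserted, and which reproduces (\ref{xx}) after using that a symmetric interval times a real factor equals the symmetric interval scaled by that factor's magnitude. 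Your key observation for (ii) --- that the condition of type (\ref{rhoDelta}) written for the auxiliary system (\ref{eqY}), whose midpoint matrix is the identity and whose coefficient matrices are the block columns of $-RCL$, collapses exactly to (\ref{strReg2}) --- is the crux of why methods requiring (\ref{rhoDelta}) can be applied to compute $\inum{y}$, and is the step a blind reconstruction most often misses; you got it right.
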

The condition (\ref{rhoDelta}) is weaker and holds true when the
condition (\ref{strReg2}) is satisfied, cf. \cite{PopovaED:2018}.
The interval vector $\inum{y}$ in Theorem \ref{solution1} (ii) can
be obtained by a variety of numerical methods, many of them are
discussed in \cite{PopovaED:2018}.

\begin{theorem}[\cite{PopovaED:2018ArXiv}]\label{pg-sol}
Let (\ref{plsG}) be the equivalent, optimal rank one, representation
of the system (\ref{pls}) and let the matrix $A(\check{p})$ be
nonsingular. Denote $C=A^{-1}(\check{p})$ and
$\check{x}=Ca(\check{p})$. If (\ref{strReg2}) holds true, then
\begin{itemize}
\item[i)] there exists an united {\bf parameterized solution} of the system
(\ref{pls}), (\ref{plsG})
\begin{equation}\label{x(pg)}
x(p) = \check{x} - (CF)\left(\check{p}_{\pi''}-p_{\pi''}\right) +
\left(CLD_{|\inum{y} -
t|}\right)g\left(\check{p}_{\pi'}-p_{\pi'}\right), \quad
 p\in \inum{p},
\end{equation}
where $\inum{y}\supseteq\Sigma_{\rm uni}((\ref{eqY}))$,
\item[ii)] with the same $\inum{y}$ used in (\ref{xx}) and in (\ref{x(pg)}),
interval evaluation $x\left(\inum{p}\right)$ of $x(p)$ is equal to
the interval vector $\inum{x}$ obtained by Theorem \ref{solution1}.
\end{itemize}
\end{theorem}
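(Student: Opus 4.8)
The plan is to treat part (ii) as the computational core and to obtain part (i) from the same algebraic derivation (or, alternatively, as an immediate consequence of (ii) together with Theorem \ref{solution1}(iii)). The common engine is an exact pointwise identity for the solution set, obtained by expanding (\ref{plsG}) about the midpoint. Using the linearity of $g$ and of $p\mapsto D_{g(p)}$ one writes $A(p)=A(\check p)-LD_{g(\check p_{\pi'}-p_{\pi'})}R$ and $a(p)=a(\check p)-LD_{g(\check p_{\pi'}-p_{\pi'})}t-F(\check p_{\pi''}-p_{\pi''})$. Substituting $x=\check x+\Delta x$ with $\check x=Ca(\check p)$, cancelling the midpoint equation $A(\check p)\check x=a(\check p)$, and solving for $\Delta x$ gives
\[ x=\check x-CF(\check p_{\pi''}-p_{\pi''})+CL\,D_{g(\check p_{\pi'}-p_{\pi'})}(Rx-t). \]
Left-multiplying by $R$ and setting $y:=Rx$ produces exactly the auxiliary system (\ref{eqY}), with $\check p-p$ in the role of the centred parameter ranging over $[-\hat p,\hat p]$; hence $Rx=y\in\Sigma_{\rm uni}((\ref{eqY}))\subseteq\inum y$ for every solution $x$.

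For part (i) I would read the displayed identity as an instance of (\ref{x(pg)}). Using the elementary commutation $D_{g(\check p_{\pi'}-p_{\pi'})}(Rx-t)=D_{Rx-t}\,g(\check p_{\pi'}-p_{\pi'})$, the $\pi'$-contribution of a genuine solution is $CL\,D_{Rx-t}\,g(\check p_{\pi'}-p_{\pi'})$. Since $Rx\in\inum y$ gives $|Rx-t|\le|\inum y-t|$ componentwise, while $|g(\check p_{\pi'}-p_{\pi'})|\le g([-\hat p_{\pi'},\hat p_{\pi'}])$ and $|\check p_{\pi''}-p_{\pi''}|\le\hat p_{\pi''}$, a termwise magnitude estimate places each such $x$ in the natural interval extension $x(\inum p)$ of (\ref{x(pg)}). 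Thus $\Sigma^p_{\rm uni}\subseteq x(\inum p)$ and (\ref{x(pg)}) is indeed a parameterized solution. (Once (ii) is established this is also immediate, since then $\Sigma^p_{\rm uni}\subseteq\inum x=x(\inum p)$ by Theorem \ref{solution1}(iii).)

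For part (ii) I would evaluate the natural interval extension of (\ref{x(pg)}) directly and match it to (\ref{xx}) term by term. The constant $\check x$ and the $\pi''$-term $-CF(\check p_{\pi''}-\inum p_{\pi''})=-CF[-\hat p_{\pi''},\hat p_{\pi''}]$ coincide with the corresponding terms of (\ref{xx}) at once. For the $\pi'$-term the essential identity is $D_{|\inum y-t|}\,g(\check p_{\pi'}-\inum p_{\pi'})=D_{g([-\hat p_{\pi'},\hat p_{\pi'}])}\,|\inum y-t|$: indeed $\check p_{\pi'}-\inum p_{\pi'}=[-\hat p_{\pi'},\hat p_{\pi'}]$, and since $|\inum y-t|\ge0$ both sides reduce to the componentwise products of the nonnegative vector $|\inum y-t|$ with the symmetric interval vector $g([-\hat p_{\pi'},\hat p_{\pi'}])$. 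Left-multiplication by the real matrix $CL$ then gives the same interval vector on both sides, so $x(\inum p)$ equals the right-hand side of (\ref{xx}), i.e.\ the enclosure $\inum x$ of Theorem \ref{solution1}.

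The main obstacle is the interval-arithmetic bookkeeping rather than any deep inequality. Two points need care. First, the associativity $CL\,(D_w\,\inum s)=(CL\,D_w)\,\inum s$ used to pass between the bracketing in (\ref{x(pg)}) and that in (\ref{xx}), with $w:=|\inum y-t|$ and $\inum s:=g(\check p_{\pi'}-\inum p_{\pi'})$, is legitimate only because $D_w$ is \emph{diagonal}; for general real matrices, products with interval vectors are not associative. Second, the statement concerns the natural interval extension of (\ref{x(pg)}) — which treats the repeated occurrences of each $p_{\pi'_k}$ inside $g$ as independent — and not the tighter interval hull, so that no cancellation across a repeated block is presumed. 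Keeping straight the sign convention $\check p-p$ versus $p-\check p$ when identifying the derived equation with (\ref{eqY}) is the remaining spot where a routine slip could arise.
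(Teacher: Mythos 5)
Your proof is correct, and it is worth stressing that this paper itself contains no proof of Theorem \ref{pg-sol}: the result is imported from \cite{PopovaED:2018ArXiv}, and the only in-paper justification is the chain of equalities (\ref{x1(p,y)}) $=$ (\ref{x2(p,y)}) $= \inum{x}$ asserted inside the proof of Theorem \ref{pl-KolevStyle}, to which Corollary \ref{equvKP} appeals. Compared with that route, you do two things differently. First, you re-derive the midpoint expansion $x=\check x-CF(\check p_{\pi''}-p_{\pi''})+CL\,D_{g(\check p_{\pi'}-p_{\pi'})}(Rx-t)$ and the identification of $y=Rx$ as a solution of the auxiliary system (\ref{eqY}); in the paper this is precisely the content of Theorem \ref{solution1}, cited from \cite{PopovaED:2018}, so your argument is self-contained where the paper's is not, at the price of duplicating cited machinery (you also correctly note the shortcut: part (i) follows at once from part (ii) together with Theorem \ref{solution1}(iii)). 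Second, for part (ii) your argument and the paper's implicit one rest on the same two elementary facts: that a symmetric interval multiplied by an interval or real quantity equals the symmetric interval scaled by the magnitude, which gives $D_{|\inum{y}-t|}\,g(\check p_{\pi'}-\inum{p}_{\pi'})=D_{g([-\hat p_{\pi'},\hat p_{\pi'}])}\,|\inum{y}-t|$, and the associativity $\left(CLD_w\right)\inum{s}=(CL)\left(D_w\inum{s}\right)$, which is legitimate only because $D_w$ is diagonal. Your explicit observation that the equality in (ii) can hold only for the \emph{natural interval extension} of (\ref{x(pg)}) --- with the repeated occurrences of each $p_{\pi'_k}$ inside $g$ treated independently, no collection of coefficients across a block --- is a genuine point that the paper leaves implicit; under the exact-range-hull reading, coefficients within a block could cancel and both claims would fail, so flagging this is a strength of your write-up rather than a deviation. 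In short: the core interval identities coincide with those underlying the paper's treatment, but you organize them into a self-contained derivation instead of a corollary of the cited Theorem \ref{solution1}.
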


%%%%%%%%%%%%%%%%%%%%%%%%%%%%%%%%%%%%%%%%%%%%%%%%%%%%%%%%%%%%%%%
{\bf 3. Another method for parameterized solution.} In
\cite{Kolev2018} Kolev proposes a parameterized solution based: (a)
on a generalized method of Neumaier and Pownuk \cite{Pop2014CAMWA}
(abbreviated here as iGNP), and (b) on affine arithmetic. It is
reported in \cite{Kolev2018} that the implementation of the proposed
there parameterized method is eight times slower than the interval
method iGNP from \cite{Pop2014CAMWA}. We suppose that the
considerable slow down is due to the affine arithmetic which is used
in both the implementation of iGNP and the parameterized solution
derivation. It is discussed in \cite{PopovaED:2018} that the
proposed there interval method (Theorem \ref{solution1}),
abbreviated as iGRank1, is applicable to the same expanded class of
parametric systems as the method iGNP and provides interval solution
enclosure of the same (sometimes better) quality while overcoming
some specific features that have to be considered in the
implementation of iGNP. In what follows (Theorem
\ref{pl-KolevStyle}) we propose a new parameterized solution,
abbreviated as pKRank1, which is based on Theorem \ref{solution1}
and does not require affine arithmetic. It will be shown (Corollary
\ref{equvKP}) that the interval solution enclosures based on pKRank1
and pPRank1 (Theorem \ref{pg-sol}) are the same in exact arithmetic.
Also, the parameterized solutions pKRank1 and pPRank1 are applicable
to the interval method iGNP with $\inum{y}$ obtained by the latter
(Proposition \ref{NP=sol1}).

\begin{theorem}\label{pl-KolevStyle}
Let (\ref{plsG}) be the equivalent, optimal rank one, representation
of (\ref{pls}) and let $A(\check{p})$ be nonsingular. Denote
$C=A^{-1}(\check{p})$, $\check{x}=Ca(\check{p})$ and let
(\ref{strReg2}) hold true.
\begin{itemize}
\item[(i)] There exists a parameterized solution enclosure of $\Sigma_{uni}((\ref{pls}))$
$$
x(p, r) = \check{x} - (CF)(\check{p}_{\pi''}-p_{\pi''}) +
(CLD_{\check{y}-t})g(\check{p}_{\pi'}-p_{\pi'}) + r, \quad
p\in\inum{p}, r\in \inum{r}=[-\hat{r},\hat{r}],
$$
where $\check{y}=R\check{x}$, $\inum{y}$ is that of Theorem
\ref{solution1} (ii), and
$\hat{r}=|CL|D_{|\inum{y}-\check{y}|}g(\hat{p}_{\pi'})$.
\item[(ii)] The interval evaluation $x(\inum{p}, \inum{r})$ of the function in (i) is equal to the
interval vector $\inum{x}$, obtained by Theorem \ref{solution1},
provided that both vectors are based on the same $\inum{y}$ of
Theorem \ref{solution1} (ii).
\end{itemize}
\end{theorem}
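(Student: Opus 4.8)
The plan is to start from an arbitrary point of the solution set, reduce the original system to the auxiliary system (\ref{eqY}), and then read off the claimed parameterized form together with a residual that must be shown to lie in $\inum{r}$. Concretely, I would fix $x\in\Sigma_{uni}((\ref{pls}))$, so that $A(p)x=a(p)$ for some $p\in\inum{p}$. Splitting $A(p)=A(\check{p})-LD_{g(\check{p}_{\pi'}-p_{\pi'})}R$ and $a(p)=a(\check{p})-LD_{g(\check{p}_{\pi'}-p_{\pi'})}t-F(\check{p}_{\pi''}-p_{\pi''})$ (both follow from (\ref{plsG}) and the linearity of $g$), and multiplying by $C=A^{-1}(\check{p})$, I obtain
\[
x=\check{x}-(CF)(\check{p}_{\pi''}-p_{\pi''})+CLD_{g(\check{p}_{\pi'}-p_{\pi'})}(Rx-t).
\]
Setting $y:=Rx$ and multiplying this identity on the left by $R$ shows that $y$ solves (\ref{eqY}) with the centred parameter $q=\check{p}-p\in[-\hat{p},\hat{p}]$; hence $y\in\Sigma_{uni}((\ref{eqY}))\subseteq\inum{y}$. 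Using $Rx-t=(\check{y}-t)+(y-\check{y})$ with $\check{y}=R\check{x}$, and the diagonal identity $D_{g(\check{p}_{\pi'}-p_{\pi'})}(\check{y}-t)=D_{\check{y}-t}\,g(\check{p}_{\pi'}-p_{\pi'})$, this rewrites exactly as $x=x(p,r)$ with $r:=CLD_{g(\check{p}_{\pi'}-p_{\pi'})}(y-\check{y})$.

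For part (i) it then remains to bound $r$. Componentwise $|r|\le|CL|\,|D_{g(\check{p}_{\pi'}-p_{\pi'})}(y-\check{y})|$, and since each $g_j$ merely copies the relevant coordinate we have $|g_j(\check{p}_{\pi'}-p_{\pi'})|\le g_j(\hat{p}_{\pi'})$, while $y\in\inum{y}$ gives $|y_j-\check{y}_j|\le|\inum{y}_j-\check{y}_j|$. Multiplying these bounds yields $|r|\le|CL|D_{|\inum{y}-\check{y}|}g(\hat{p}_{\pi'})=\hat{r}$, i.e. $r\in\inum{r}$. Thus every $x\in\Sigma_{uni}$ is of the form $x(p,r)$ for some $p\in\inum{p}$, $r\in\inum{r}$, so $\Sigma_{uni}\subseteq x(\inum{p},\inum{r})$, which is the asserted enclosure property.

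For part (ii) I would note first that both $\inum{x}$ of Theorem \ref{solution1} and $x(\inum{p},\inum{r})$ have midpoint $\check{x}$, because every $p$\nobreakdash-dependent term and the term $r$ range over sets symmetric about $0$; equality of the two interval vectors therefore reduces to equality of radii, which I would compute coordinatewise using the same interval evaluation by which $\inum{x}$ is formed (so that a parameter duplicated by $g$ is treated as independent on both sides). The $\pi''$\nobreakdash-contributions coincide verbatim, giving radius $|CF|\hat{p}_{\pi''}$. The $\pi'$\nobreakdash-contributions match precisely when, for each index $j$,
\[
|\inum{y}_j-t_j|=|\check{y}_j-t_j|+|\inum{y}_j-\check{y}_j|,
\]
since the left side is the weight appearing in $\inum{x}$ (via $|\inum{y}-t|$) and the two summands on the right are the weights contributed by $D_{\check{y}-t}$ in the parameterized part and by $\hat{r}$, respectively.

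The main obstacle is exactly this magnitude identity, whose two sides generally differ. It holds here because $\check{y}=R\check{x}$ is the midpoint of $\inum{y}$: the enclosure of the centred auxiliary system (\ref{eqY}) delivered by the methods of Theorem \ref{solution1}(ii) is symmetric about its midpoint solution $\check{y}$, and for an interval symmetric about $\check{y}_j$ the magnitude relative to an arbitrary point $t_j$ equals the distance from $\check{y}_j$ to $t_j$ plus the radius $|\inum{y}_j-\check{y}_j|$. (By subadditivity of the magnitude one always has "$\le$" in the split, so without symmetry one only obtains $x(\inum{p},\inum{r})\supseteq\inum{x}$; establishing the reverse inclusion, hence equality, is precisely where the midpoint property of $\inum{y}$ is essential.) Granting this, the $\pi'$\nobreakdash-radii agree term by term, the midpoints coincide, and therefore $x(\inum{p},\inum{r})=\inum{x}$, completing part (ii).
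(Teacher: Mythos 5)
Your part (i) is correct and follows essentially the paper's own route: the paper starts from the inclusion (\ref{xx}) of Theorem \ref{solution1}(iii), splits $\inum{y}-t=(\check{y}-t)+(\inum{y}-\check{y})$, commutes the diagonal factors ($D_a b = D_b a$), and reads off $\hat{r}=|CL|D_{|\inum{y}-\check{y}|}g(\hat{p}_{\pi'})$ from the symmetric remainder term; your pointwise derivation (fix $x$, set $y=Rx$, verify $y$ solves (\ref{eqY}) for the centred parameter) is the same decomposition carried out on representatives rather than on the interval function $x(p,y)$, and is if anything more self-contained.

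Part (ii) is where you diverge from the paper, and where your argument has a genuine gap relative to the theorem as stated. You correctly reduce the equality $x(\inum{p},\inum{r})=\inum{x}$ to the componentwise magnitude identity $|\inum{y}_j-t_j|=|\check{y}_j-t_j|+|\inum{y}_j-\check{y}_j|$, and you correctly observe that subadditivity alone gives only $\inum{x}\subseteq x(\inum{p},\inum{r})$. But you then close the argument by asserting that $\check{y}=R\check{x}$ is the midpoint of $\inum{y}$, on the grounds that ``the methods of Theorem \ref{solution1}(ii) deliver enclosures symmetric about $\check{y}$.'' That is not a hypothesis of the theorem and is not established anywhere: Theorem \ref{solution1}(ii) admits any $\inum{y}\supseteq\Sigma_{\rm uni}((\ref{eqY}))$ produced under (\ref{rhoDelta}), and such an enclosure need not be centred at $\check{y}$ (the hull of the solution set of (\ref{eqY}) itself is generally not). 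For an off-centre $\inum{y}$ the identity really fails --- e.g. $\inum{y}_j=[0,2]$, $\check{y}_j=0.5$, $t_j=1$ gives $|\inum{y}_j-t_j|=1$ while $|\check{y}_j-t_j|+|\inum{y}_j-\check{y}_j|=2$ --- and then $x(\inum{p},\inum{r})$ strictly contains $\inum{x}$. So as written you have proved (ii) only under an extra symmetry assumption on $\inum{y}$. The paper's own proof takes a different route that invokes no property of $\inum{y}$: from the two-sided bound $|\check{y}-t|-|\inum{y}-\check{y}|\leq|\inum{y}-t|\leq|\check{y}-t|+|\inum{y}-\check{y}|$ it forms the corresponding chain of interval products with the symmetric vector $g(\check{p}_{\pi'}-\inum{p}_{\pi'})$ and uses that the remainder interval $\left(CLD_{|\inum{y}-\check{y}|}\right)g(\check{p}_{\pi'}-\inum{p}_{\pi'})$ equals its own negative, so the lower and upper bounding intervals coincide and squeeze out equality. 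To be fair, your analysis pinpoints exactly the subtlety that this terse squeeze glosses over (how an inequality of magnitudes, whose lower bound may be negative, transfers to the radii of the symmetric interval products); but to repair your proof you would have to state the symmetry of $\inum{y}$ about $\check{y}$ as an explicit assumption, or prove it for the particular enclosure method used --- as it stands, that step is unsupported.
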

\begin{proof}
Since (\ref{strReg2}) holds true, Theorem \ref{solution1} implies
that every $x\in\Sigma_{uni}\left(A(p), a(p), \inum{p}\right)$
satisfies
\begin{eqnarray} \label{x1(p,y)}
x  \in \inum{x} &=& \check{x} -
(CF)\left(\check{p}_{\pi''}-\inum{p}_{\pi''}\right) +
(CL)\left(D_{g(\check{p}_{\pi'}-\inum{p}_{\pi'})}\left(\inum{y}-t\right)\right) \\ %
 &=& \check{x} -
(CF)\left(\check{p}_{\pi''}-\inum{p}_{\pi''}\right) +
\left(CLD_{|\inum{y}-t|}\right)g(\check{p}_{\pi'}-\inum{p}_{\pi'}).
\label{x2(p,y)}
\end{eqnarray}
Consider the right-hand side in (\ref{x1(p,y)}) as an interval
function $x(p,y)$ of $p\in \inum{p}$, $y\in\inum{y}$ and rearrange
it as follows.
\begin{eqnarray*}
x(p,y)&=&\check{x} - (CF)\left(\check{p}_{\pi''}-p_{\pi''}\right) +
(CL)\left(D_{g(\check{p}_{\pi'}-p_{\pi'})}(\check{y}-t)\right) +
(CL)\left(D_{g(\check{p}_{\pi'}-p_{\pi'})}\left(y-\check{y}\right)\right) \\ %
&=&\check{x} - (CF)\left(\check{p}_{\pi''}-p_{\pi''}\right) +
(CL)\left(D_{\check{y}-t}g(\check{p}_{\pi'}-p_{\pi'})\right) +
(CL)\left(D_{y-\check{y}}g(\check{p}_{\pi'}-p_{\pi'})\right).
\end{eqnarray*}
The interval evaluation $x(\inum{p},\inum{y})$ of the last
expression for $x(p,y)$ is
\begin{eqnarray*}
x(\inum{p},\inum{y}) &=& \check{x} -
(CF)\left(\check{p}_{\pi''}-\inum{p}_{\pi''}\right)
+\left(CLD_{|\check{y}-t|}\right)g(\check{p}_{\pi'}-\inum{p}_{\pi'})+ \left(CLD_{|\inum{y}-\check{y}|}\right)g(\check{p}_{\pi'}-\inum{p}_{\pi'}), %
\end{eqnarray*}
the latter implying the representation (i). In order to prove (ii)
we need to prove that $x(\inum{p},\inum{y})=(\ref{x2(p,y)})$. Since
$|\check{y}-t| - |\inum{y}-\check{y}|\leq |\inum{y}-t| \leq
|\check{y}-t| + |\inum{y}-\check{y}|$,
\begin{multline*}
\left(CLD_{|\check{y}-t|}\right)g(\check{p}_{\pi'}-\inum{p}_{\pi'})-
\left(CLD_{|\inum{y}-\check{y}|}\right)g(\check{p}_{\pi'}-\inum{p}_{\pi'})\leq
\left(CLD_{|\inum{y}-t|}\right)g(\check{p}_{\pi'}-\inum{p}_{\pi'}) \leq \\
\left(CLD_{|\check{y}-t|}\right)g(\check{p}_{\pi'}-\inum{p}_{\pi'})+
\left(CLD_{|\inum{y}-\check{y}|}\right)g(\check{p}_{\pi'}-\inum{p}_{\pi'}).
\end{multline*}
Since $g(\check{p}_{\pi'}-\inum{p}_{\pi'})$ and
$\left(CLD_{|\inum{y}-\check{y}|}\right)g(\check{p}_{\pi'}-\inum{p}_{\pi'})$
are symmetric interval vectors,
$\left(CLD_{|\inum{y}-\check{y}|}\right)g(\check{p}_{\pi'}-\inum{p}_{\pi'})=-
\left(CLD_{|\inum{y}-\check{y}|}\right)g(\check{p}_{\pi'}-\inum{p}_{\pi'})$,
which implies the required assertion and (ii).
\end{proof}

\begin{corollary}\label{equvKP}
Let (\ref{plsG}) be the equivalent, optimal rank one, representation
of (\ref{pls}) and let  $A(\check{p})$ be nonsingular. Denote
$C=A^{-1}(\check{p})$ and $\check{x}=Ca(\check{p})$. If
(\ref{strReg2}) holds true, then
$$x(\inum{p}, \inum{r}) = x(\inum{p}) = \inum{x},$$ where $x(p, r)$ is that of Theorem \ref{pl-KolevStyle},
$x(p)$ is that of Theorem \ref{pg-sol} and $\inum{x}$ is that of
Theorem \ref{solution1}, provided that all computations are in exact
arithmetic and both parameterized solutions use the same $\inum{y}$
of Theorem \ref{solution1} (ii).
\end{corollary}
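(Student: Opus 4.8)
The plan is to recognize this corollary as a direct consolidation of the two part-(ii) statements already established in Theorems~\ref{pg-sol} and~\ref{pl-KolevStyle}. Each of those parts asserts that an interval evaluation of a parameterized solution coincides with the numerical interval vector $\inum{x}$ of Theorem~\ref{solution1}; since $\inum{x}$ is the common reference quantity, chaining the two equalities yields the triple equality of the corollary. Concretely, I would first invoke Theorem~\ref{pl-KolevStyle}(ii) to write $x(\inum{p},\inum{r}) = \inum{x}$, then invoke Theorem~\ref{pg-sol}(ii) to write $x(\inum{p}) = \inum{x}$, and finally combine by transitivity of equality.

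Before chaining, I would verify that the hypotheses of both theorems are met, which is immediate: the corollary carries exactly the standing assumptions shared by the two theorems, namely that (\ref{plsG}) is the equivalent optimal rank one representation of (\ref{pls}), that $A(\check{p})$ is nonsingular, and that the strong regularity condition (\ref{strReg2}) holds. Under (\ref{strReg2}), Theorem~\ref{pg-sol}(i) guarantees that the parameterized solution $x(p)$ in (\ref{x(pg)}) exists, and Theorem~\ref{pl-KolevStyle}(i) likewise guarantees that $x(p,r)$ exists, so both interval evaluations appearing in the statement are well defined.

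The one point requiring attention, though it is built into the hypotheses rather than being a genuine obstacle, is the dependence of both part-(ii) results on a fixed choice of the enclosure $\inum{y}$. Each theorem's (ii) is conditional on $\inum{y}$ being the interval vector of Theorem~\ref{solution1}(ii) and on the computation being exact. The corollary assumes precisely this common $\inum{y}$ and exact arithmetic, so the two occurrences of $\inum{x}$ coming from the two theorems are literally the same interval vector, and no rounding can separate the two interval evaluations. With this alignment confirmed, transitivity gives $x(\inum{p},\inum{r}) = \inum{x} = x(\inum{p})$, which is the claim. I do not anticipate any real difficulty here: the substantive work, in particular the symmetry argument that collapses the split enclosure in the proof of Theorem~\ref{pl-KolevStyle}(ii), has already been carried out in establishing the two theorems, and the corollary merely records their joint consequence.
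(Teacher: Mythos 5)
Your proposal is correct and matches the paper's reasoning in substance: the paper's one-line proof also treats the corollary as a consolidation of already-established equalities, noting that the proof of Theorem~\ref{pl-KolevStyle} gives $x(\inum{p},\inum{r}) = (\ref{x2(p,y)}) = \inum{x}$ while $x(\inum{p})$ is, by inspection of (\ref{x(pg)}), exactly the expression (\ref{x2(p,y)}). Your only cosmetic deviation is that you invoke Theorem~\ref{pg-sol}(ii) as a black box for $x(\inum{p}) = \inum{x}$ rather than identifying $x(\inum{p})$ with (\ref{x2(p,y)}) directly, which is an equally valid route to the same chain of equalities.
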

\begin{proof}The proof is part of the proof of Theorem
\ref{pl-KolevStyle} since $x(\inum{p}) = (\ref{x2(p,y)})=\inum{x}$.
\end{proof}

\begin{proposition}\label{NP=sol1}
Let (\ref{plsG}) be the equivalent, optimal rank one, representation
of (\ref{pls}) and let $A(\check{p})$ be nonsingular. Denote
$C=A^{-1}(\check{p})$ and let (\ref{strReg2}) holds true. If
$\inum{y}$ is obtained by {\rm \cite[Theorem 4]{Pop2014CAMWA}} and
the implementation iteration thereafter, the interval vector
$$\inum{x}_{R1} =
\check{x}-(CF)g(\check{p}_{\pi''}-\inum{p}_{\pi''}) +
(CL)\left(D_{g(\check{p}_{\pi'}-\inum{p}_{\pi'})}(\inum{y}-t)\right),$$
obtained by Theorem \ref{solution1}, and the interval vector
$$\inum{x}_{NP} =
Ca_0 + (CF)\inum{p}_{\pi''} + (CL)\left(D_{g(\check{p}_{\pi'})}t +
D_{g(\check{p}_{\pi'}-\inum{p}_{\pi'})}(\inum{y}-t)\right),$$
obtained by the implementation of {\rm \cite[Theorem
4]{Pop2014CAMWA}}, are equal.
\end{proposition}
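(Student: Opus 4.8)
The plan is to expand the midpoint solution $\check{x}$ explicitly and then reconcile the two vectors block by block, exploiting that $C$, $F$, $L$ (hence $CF$ and $CL$) are real point matrices, so that multiplication by them distributes \emph{exactly} over interval addition. First I would substitute the value of $\check{x}$ dictated by the right-hand side of (\ref{plsG}): since $a(\check{p}) = a_0 + LD_{g(\check{p}_{\pi'})}t + F\check{p}_{\pi''}$, we have $\check{x} = Ca(\check{p}) = Ca_0 + (CL)D_{g(\check{p}_{\pi'})}t + (CF)\check{p}_{\pi''}$. Inserting this into $\inum{x}_{R1}$ splits it into an $a_0$ term, a $\pi''$ block, and a $\pi'$ block, matching the grouping already present in $\inum{x}_{NP}$.

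Next I would handle the $\pi''$ block, which after substitution reads $(CF)\check{p}_{\pi''} - (CF)(\check{p}_{\pi''} - \inum{p}_{\pi''})$. The interval $\check{p}_{\pi''} - \inum{p}_{\pi''}$ equals the symmetric interval $[-\hat{p}_{\pi''}, \hat{p}_{\pi''}]$, and for a point matrix $M$ one has $M\check{p}_{\pi''} - M[-\hat{p}_{\pi''},\hat{p}_{\pi''}] = M\check{p}_{\pi''} + M[-\hat{p}_{\pi''},\hat{p}_{\pi''}] = M\inum{p}_{\pi''}$, because subtracting a symmetric interval coincides with adding it, and because a point matrix times an interval vector decomposes exactly as a midpoint contribution plus a radius contribution. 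Taking $M = CF$ collapses the $\pi''$ block to $(CF)\inum{p}_{\pi''}$, which is precisely the $\pi''$ term of $\inum{x}_{NP}$.

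For the $\pi'$ block the only discrepancy between the two vectors is the placement of the parentheses: $\inum{x}_{R1}$ carries $(CL)D_{g(\check{p}_{\pi'})}t + (CL)\bigl(D_{g(\check{p}_{\pi'}-\inum{p}_{\pi'})}(\inum{y}-t)\bigr)$, whereas $\inum{x}_{NP}$ carries $(CL)$ applied to the sum of those same two vectors. Since $CL$ is a point matrix, multiplication by it distributes exactly over the sum of the point vector $D_{g(\check{p}_{\pi'})}t$ and the interval vector $D_{g(\check{p}_{\pi'}-\inum{p}_{\pi'})}(\inum{y}-t)$, so the two $\pi'$ blocks agree. Collecting the $a_0$, $\pi''$ and $\pi'$ blocks then yields $\inum{x}_{R1}=\inum{x}_{NP}$.

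The routine algebra here is light; the single point demanding care—and the place where I would state the relevant interval-arithmetic identity explicitly or cite it—is the justification that every distribution step is an \emph{equality} rather than a mere inclusion. This hinges entirely on $CF$ and $CL$ being real point matrices (so the sub-distributivity that normally plagues interval products becomes an identity) and on the $\pi''$ radius interval being symmetric. Without making those two observations explicit, one would only be guaranteed a one-sided inclusion, so I would flag them as the crux of the argument. I would also note that the symbol $g$ on the $\pi''$ argument reduces to the identity there, consistent with the plain $Fp_{\pi''}$ form of the right-hand side in (\ref{plsG}).
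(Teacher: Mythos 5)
Your proposal is correct and takes essentially the same route as the paper: both proofs hinge on expanding $\check{x}=Ca(\check{p})=Ca_0+(CF)\check{p}_{\pi''}+(CL)D_{g(\check{p}_{\pi'})}t$ and reconciling the resulting blocks of $\inum{x}_{R1}$ with those of $\inum{x}_{NP}$. The paper states this identity (together with the notational dictionary to the method of Neumaier--Pownuk) and leaves the interval-arithmetic exactness implicit, whereas you spell out the two points that make every step an equality rather than an inclusion --- the symmetry of $\check{p}_{\pi''}-\inum{p}_{\pi''}$ and the exact distributivity of the point matrices $CF$, $CL$ over interval sums --- which is a fair and useful elaboration of the same argument.
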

\begin{proof}
In the notation of \cite{Pop2014CAMWA}, $D_0={\rm
Diag(g(\check{p}_{\pi'}))}= D_{g(\check{p}_{\pi'})}$,
$\left[D\right]= D_{g(\inum{p}_{\pi'})}$,
$[d]:=\left(D_0-\left[D\right]\right)(\inum{y}-t)=
D_{g(\check{p}_{\pi'}-\inum{p}_{\pi'})}(\inum{y}-t)$. Then, due to
$\check{x}=Ca(\check{p}) = Ca_0 +CF\check{p}_{\pi''} +
CLD_{g(\check{p}_{\pi'})}t$, we have the desired equality.
\end{proof}

Proposition \ref{NP=sol1} implies that the two kinds of
parameterized solutions, obtained by Theorem \ref{pg-sol} and
Theorem \ref{pl-KolevStyle}, are applicable to the generalized
method of Neumaier and Pownuk \cite{Pop2014CAMWA} (iGNP) with
$\inum{y}$ obtained by the latter method. Since \cite{PopovaED:2018}
reports for better solution enclosures provided by Theorem
\ref{solution1} compared to iGNP for some problems, as well as for a
better performance in a computing environment, it is expected that
these advantages will be attributable to the above two kinds of
parameterized solutions, obtained by Theorem \ref{pg-sol} and
Theorem \ref{pl-KolevStyle}. One advantage of the parameterized
solutions involving the remainder term $r\in [-\hat{r},\hat{r}]$  is
that they allow obtaining an inner estimate of the hull solution,
presented in the next proposition.

\begin{proposition}\label{inner}
Let $x(q)=\check{x}+Uq+r$, $q\in [-\hat{q},\hat{q}]=\inum{q}$, $r\in
[-\hat{r},\hat{r}]=\inum{r}$, be the parameterized solution obtained
by Theorem \ref{pl-KolevStyle}, where
$U=\left(-CF,CLD_{\check{y}-t}\right)$, $q=(p_{\pi''}^\top,
g^\top(p_{\pi'}))^\top$, $\hat{q}=(\hat{p}_{\pi''}^\top,
g^\top(\hat{p}_{\pi'}))^\top$. Define
\begin{eqnarray*}
\inum{v}^{\rm Low} := \check{x}+\left( U\inum{q}\right)^- +
\inum{r}, &\quad & \inum{v}^{\rm Up} := \check{x}+\left(
U\inum{q}\right)^+ + \inum{r}.
\end{eqnarray*}
With $\inum{x}_* = \left[x_*^-, x_*^+\right] = \square \Sigma^p_{\rm
uni}((\ref{pls}))$,
\begin{eqnarray*}
x_*^- \in \inum{v}^{\rm Low}, \; x_*^+ \in \inum{v}^{\rm Up},
&\text{that is } & \inum{x}_*\subseteq \inum{v}^{\rm Low} \cup
\inum{v}^{\rm Up} = x(\inum{q}).
\end{eqnarray*}
Define $\inum{x}_{\rm in}(\inum{q}) := \check{x}+U\inum{q} +
\inum{r}_-$, where the interval evaluation is in Kaucher interval
arithmetic {\rm \cite{Kaucher80}} and $\inum{r}_-$ denotes
$\dual{\inum{r}}$. In classical interval arithmetic
$$\left(\inum{x}_{\rm in}(\inum{q})\right)^- = \check{x}+U\inum{q} + \hat{r}, \quad
\left(\inum{x}_{\rm in}(\inum{q})\right)^+ = \check{x}+U\inum{q} -
\hat{r}.
$$
For every $i$, $1\leq i\leq n$, such that $\left(\inum{x}_{\rm
in}(\inum{q})\right)^-_i > \left(\inum{x}_{\rm
in}(\inum{q})\right)^+_i$, substitute $\left(\inum{x}_{\rm
in}(\inum{q})\right)_i = \emptyset$. Then, it holds $\inum{x}_{\rm
in}(\inum{q}) \subseteq   \square \Sigma^p_{\rm uni}((\ref{pls}))
\subseteq x(\inum{q})$.
\end{proposition}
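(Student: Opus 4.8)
The plan is to split the two-sided conclusion $\inum{x}_{\rm in}(\inum{q}) \subseteq \square\Sigma^p_{\rm uni}((\ref{pls})) \subseteq x(\inum{q})$ into its two halves and to observe that, componentwise, the left inclusion is merely a restatement of the two membership assertions $x_*^-\in\inum{v}^{\rm Low}$ and $x_*^+\in\inum{v}^{\rm Up}$ made earlier in the proposition. Indeed, in ordinary interval arithmetic $(\inum{x}_{\rm in}(\inum{q}))^-_i = \check{x}_i + (U\inum{q})^-_i + \hat{r}_i$, which is exactly the right endpoint of $\inum{v}^{\rm Low}_i$, while $(\inum{x}_{\rm in}(\inum{q}))^+_i = \check{x}_i + (U\inum{q})^+_i - \hat{r}_i$ is the left endpoint of $\inum{v}^{\rm Up}_i$. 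Hence once $x_*^-_i\in\inum{v}^{\rm Low}_i$ and $x_*^+_i\in\inum{v}^{\rm Up}_i$ are known, they read off as $x_*^-_i \le (\inum{x}_{\rm in}(\inum{q}))^-_i$ and $(\inum{x}_{\rm in}(\inum{q}))^+_i \le x_*^+_i$ on every component where $\inum{x}_{\rm in}$ is non-empty; the components reset to $\emptyset$ are contained vacuously. The right inclusion $\square\Sigma^p_{\rm uni}\subseteq x(\inum{q})$ is immediate, since by Theorem \ref{pl-KolevStyle} the box $x(\inum{q})$ encloses $\Sigma^p_{\rm uni}$ and $\square\Sigma^p_{\rm uni}$ is by definition the smallest such box.

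It therefore remains to prove $x_*^-\in\inum{v}^{\rm Low}$ (the argument for $x_*^+\in\inum{v}^{\rm Up}$ is symmetric). Fix a component $i$. The lower inequality $x_*^-_i \ge \check{x}_i + (U\inum{q})^-_i - \hat{r}_i$ is the left endpoint of $x(\inum{q})_i$ and follows from the outer enclosure $\Sigma^p_{\rm uni}\subseteq x(\inum{q})$. For the upper inequality $x_*^-_i \le \check{x}_i + (U\inum{q})^-_i + \hat{r}_i$ I would exhibit a genuine solution whose $i$-th coordinate is that small. The key is a per-parameter version of the enclosure: for each fixed $p\in\inum{p}$ the exact solution is $x(p)=\check{x}+Uq(p)+r(p)$ with $r(p)=(CL)D_{g(\check{p}_{\pi'}-p_{\pi'})}(y(p)-\check{y})$, where $y(p)\in\inum{y}$ solves (\ref{eqY}); using $D_ab=D_ba$ together with $|g(\check{p}_{\pi'}-p_{\pi'})|\le g(\hat{p}_{\pi'})$ and $|y(p)-\check{y}|\le\hat{y}$ one gets $|r(p)|\le|CL|D_{g(\hat{p}_{\pi'})}\hat{y}=\hat{r}$, i.e. $r(p)\in\inum{r}$. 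Evaluating at the vertex $p^*$ of $\inum{p}$ that drives $(Uq)_i$ to its minimum then places $x(p^*)_i$ inside $\inum{v}^{\rm Low}_i$, and since $x_*^-_i\le x(p^*)_i$ this gives the upper inequality. Combining the two halves finishes the proof: on each non-empty component the inner box lies in $[x_*^-_i,x_*^+_i]$, and the outer enclosure supplies $\square\Sigma^p_{\rm uni}\subseteq x(\inum{q})$.

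The step I expect to be the main obstacle is precisely this upper (reachability) inequality, namely that the free interval extreme $(U\inum{q})^-_i$ is nearly attained by an honest parameter vector. The difficulty is that $q$ is \emph{not} free over $[-\hat{q},\hat{q}]$: inside each block of $g(p_{\pi'})$ all $\gamma_k$ entries are copies of the single deviation $\check{p}_{\pi'_k}-p_{\pi'_k}$, whereas the product $U\inum{q}$ treats them independently and may over-estimate, so the vertex minimizing the free $(U\inum{q})_i$ need not correspond to any $p\in\inum{p}$. I would isolate this as a lemma: when the blocks are genuinely rank one ($\gamma_k=1$) the map $p\mapsto q$ is a reordering and every vertex is attainable, so the witness $p^*$ exists directly; in the general case one must either show the block coupling does not relax the relevant extreme, or evaluate $U\inum{q}$ respecting the dependency among the repeated entries, so that its endpoint is realized by a feasible parameter and the per-parameter bound $r(p)\in\inum{r}$ then closes the gap.
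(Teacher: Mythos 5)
Your argument is, in outline, exactly the proof the paper intends: the paper's own ``proof'' of Proposition~\ref{inner} is a single sentence deferring either to properties of Kaucher arithmetic or to the proof of Theorem~1 in \cite{Kol2014}, and the latter is precisely the scheme you wrote out --- the per-parameter decomposition $x_{\rm exact}(p)=\check{x}+Uq(p)+r(p)$ with $r(p)=(CL)D_{y(p)-\check{y}}\,g(\check{p}_{\pi'}-p_{\pi'})$, the bound $|r(p)|\le |CL|D_{|\inum{y}-\check{y}|}\,g(\hat{p}_{\pi'})=\hat{r}$ (your $D_{g(\hat{p}_{\pi'})}\hat{y}$ is the same vector), the reduction of the inner inclusion to the two memberships $x_*^-\in\inum{v}^{\rm Low}$, $x_*^+\in\inum{v}^{\rm Up}$, and vertex reachability for the upper half of each membership. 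Up to that last step your write-up is correct and complete.

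However, the obstacle you flag in your final paragraph is not merely the hard step of the proof: under the standard interval-arithmetic reading of $U\inum{q}$ (the same reading the paper uses when it identifies $x(\inum{p},\inum{r})$ with $\inum{x}$ in Theorem~\ref{pl-KolevStyle}(ii) and Corollary~\ref{equvKP}), vertex reachability genuinely fails whenever some block with $\gamma_k>1$ has mixed signs within a row of $CLD_{\check{y}-t}$, and then the memberships and the inner inclusion themselves can fail, so no lemma will close the gap. Take $n=4$, $K=1$, $p_1\in[-\delta,\delta]$ with $\delta>0$, $a(p)\equiv a_0=(1,1,0,0)^\top$ (so $\pi''=\emptyset$, $t=0$), $A_0=I$ and $A_1=L_1R_1$ with
\begin{equation*}
L_1=\begin{pmatrix}0&0\\0&0\\1&1\\1&-1\end{pmatrix},\qquad
R_1=\begin{pmatrix}1&0&0&0\\0&1&0&0\end{pmatrix},
\end{equation*}
a full-rank (hence optimal) factorization of the rank-two matrix $A_1$. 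Here $C=I$, $\check{x}=a_0$, $\check{y}=R_1\check{x}=(1,1)^\top$, and $R_1CL_1=0$, so (\ref{strReg2}) holds, (\ref{eqY}) collapses to $y=\check{y}$ for every $p$, one may take the degenerate enclosure $\inum{y}=[\check{y},\check{y}]$, and consequently $\hat{r}=0$ and $U=CL_1D_{\check{y}-t}=L_1$. Since $A_1^2=0$, the exact solution is $x(p_1)=(I-p_1A_1)a_0=(1,1,-2p_1,0)^\top$, so $\bigl(\square\Sigma^p_{\rm uni}\bigr)_4=[0,0]$. But the fourth row of $U$ is $(1,-1)$, and its two entries multiply two copies of the \emph{same} deviation $\check{p}_1-p_1$, which the interval product decouples: $(U\inum{q})_4=[-\delta,\delta]-[-\delta,\delta]=[-2\delta,2\delta]$, whereas $\min_{p}(Uq(p))_4=0$. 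Hence $\inum{v}^{\rm Low}_4=[-2\delta,-2\delta]$ does not contain $x^-_{*,4}=0$, and $\bigl(\inum{x}_{\rm in}(\inum{q})\bigr)_4=[-2\delta,2\delta]$ is nonempty yet not contained in $[0,0]$ --- while Theorem~\ref{pl-KolevStyle} itself remains valid here, its $x(p,r)$ reproducing the exact solution. So the dividing line you isolate is the correct one: your proof closes (and the statement holds) when every $\gamma_k=1$, since then $p\mapsto q$ is a bijection of boxes and every vertex of $\inum{q}$ is feasible, or when $(U\inum{q})^-,(U\inum{q})^+$ are read as the exact range bounds of $Uq(p)$ over $p\in\inum{p}$ respecting the repeated entries; in the general interval-arithmetic reading, the first horn of your proposed lemma (``the block coupling does not relax the relevant extreme'') is exactly what is false.
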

\begin{proof}
The proof can be based on the properties of Kaucher interval
arithmetic \cite{Kaucher80}, or to be done similarly to that of
\cite[Theorem 1]{Kol2014}.
\end{proof}

The methodology in \cite{PopovaED:2018} has an expanded scope of
applications for systems involving rank one uncertainty structure.
Next example demonstrates the advantage of the proposed here
parameterized solution (Theorem \ref{pl-KolevStyle}) to a variety of
parameterized solutions based on affine arithmetic and compared in
\cite{SkalnaHla19}.

\begin{example}\label{Okum}\rm
Consider the parametric linear system
\begin{equation*}
\begin{pmatrix}p_1 + p_6 & -p_6 & 0 & 0 & 0\\
-p_6 & p_2 + p_6 + p_7 & -p_7 & 0 & 0 \\ %
0 & -p_7 & p_3 + p_7 + p_8 & -p_8 & 0 \\ %
0 & 0 & -p_8 & p_4 + p_8 + p_9 & -p_9 \\ %
0 & 0 & 0 & -p_9 & p_5 + p_9
 \end{pmatrix}x = \begin{pmatrix}10\\ 0\\ 10\\ 0\\ 0 \end{pmatrix}
\end{equation*}
after \cite{Okumura}, where the parameters vary within given
intervals $p_i\in [1-\delta, 1+\delta]$. This example is considered
in \cite[Example 5]{SkalnaHla19} and the outer solution enclosures
obtained by six parameterized solutions based on affine arithmetic
are compared to the direct parameterized method (abbreviated PDM) of
\cite{Kol2016a}. Here we compare the parameterized inner and outer
bounds for the solution set, obtained by the method of Theorem
\ref{pl-KolevStyle} and Proposition \ref{inner}, and the
corresponding bounds obtained by \cite{Kol2016a}, thus comparing to
the other six parameterized solutions considered in
\cite{SkalnaHla19}. We present the results for the smallest
uncertainty $\delta=0.01$ and the largest uncertainty $\delta=0.25$,
considered in \cite{SkalnaHla19}.
\end{example}

Table \ref{Table1} presets inner and outer bounds obtained by us for
$\delta=0.01$. These bounds are much sharper than, and can be
compared to, the bounds obtained by three other parameterized
solutions reported in \cite[Table 4]{SkalnaHla19}. For the results
in Table \ref{Table1}, Table \ref{Table1a} presents two measures of
the quality of a solution enclosure: sharpness $O_s$ of the solution
enclosure $\inum{x}_{\rm out}$ defined by $Q_s(\inum{x}_{\rm in},
\inum{x}_{\rm out}) :=\left\{0 \text{ if } \inum{x}_{\rm
in}=\emptyset, {\rm rad}(\inum{x}_{\rm in})/{\rm rad}(\inum{x}_{\rm
out}) \text{ otherwise}\right\}$, and percentage $O_w$ by which an
interval $\inum{y}$ overestimates the interval $\inum{x}$,
$\inum{x}\subseteq\inum{y}$, defined by $O_w(\inum{x}, \inum{y}) :=
\left(1 - {\rm rad}(\inum{x})/{\rm rad}(\inum{y})\right) 100$. It is
seen from Table \ref{Table1a} that the range of the sharpness
measure is very close for the two methods pKRank1 and PDM. On the
other hand, the percentage by which ${\rm PDM}_{\rm out}$
overestimates ${\rm pKRank1}_{\rm out}$ is between $0.55$\% and
$0.96$\%. Table \ref{Table25} presents the two measures of the
quality of a solution enclosure for the case of large parameter
uncertainties $\delta = 0.25$ in Example \ref{Okum}. Although the
percentage by which ${\rm PDM}_{\rm out}$ overestimates ${\rm
pKRank1}_{\rm out}$ is more pronounced in this case, the ranges of
sharpness is very close for these two methods and the methods
compared in \cite{SkalnaHla19}. The first conclusion from Example
\ref{Okum} is that the methods based on condition (\ref{strReg2})
provide sharper solution enclosure than the methods based on
condition (\ref{rhoDelta}) for systems with rank one uncertainty
structure. The second important conclusion from this example is that
the sharpness measure is not quite informative when comparing the
solution enclosure of different methods in contrast to the
percentage of overestimation.

\begin{table}[ht]
{\small
\begin{tabular}{ccccc}\hline
$\inum{x}$ & \multicolumn{2}{c}{outer} & \multicolumn{2}{c}{inner}\\
\hline
         & {\small pKRank1} & PDM & {\small pKRank1} & PDM  \\
         \hline
$x_1$ & $[7.01522, 7.16659]$ & $[7.01480, 7.16702]$ & $[7.01736, 7.16446]$ & $[7.01777, 7.16405]$ \\%
$x_2$ & $[4.11780, 4.24583]$ & $[4.11736, 4.24628]$ & $[4.11987, 4.24377]$ & $[4.12030, 4.24333]$ \\%
$x_3$ & $[5.39374, 5.51535]$ & $[5.39331, 5.51578]$ & $[5.39567, 5.51342]$ & $[5.39609, 5.51300]$ \\%
$x_4$ & $[2.13805, 2.22558]$ & $[2.13770, 2.22594]$ & $[2.13962, 2.22401]$ & $[2.13997, 2.22367]$ \\%
$x_5$ & $[1.06046, 1.12136]$ & $[1.06017, 1.12165]$ & $[1.06171, 1.12011]$ & $[1.06200, 1.11982]$ \\%
\hline
\end{tabular}
} \caption{Bounds for $\square\Sigma$ in Example \ref{Okum},
$\delta=0.01$, obtained by  pKRank1 and PDM.} \label{Table1}
\end{table}

\begin{table}[h]
\centerline{
\begin{tabular}{l|cccccc}\hline
& $x_1$ & $x_2$ & $x_3$ & $x_4$ & $x_5$ & range in \cite{SkalnaHla19}\\ \hline %
$O_s$, {\small pKRank1} & 0.972 & 0.968 & 0.968 & 0.964 & 0.959 \\ %
$O_s$, {\small PDM}     & 0.961 & 0.954 & 0.954 & 0.948 & 0.940 & 0.95--0.97 \\   \hline %
\% overest.      & 0.555 & 0.692 & 0.702 & 0.799 & 0.959 \\ \hline %
\end{tabular}
} \caption{Sharpness $O_s$ for pKRank1 and PDM for the bounds in
Table \ref{Table1} and the percentage by which ${\rm PDM}_{\rm out}$
overestimates ${\rm pKRank1}_{\rm out}$.} \label{Table1a}
\end{table}

\begin{table}[h]
\centerline{
\begin{tabular}{l|cccccc}\hline
& $x_1$ & $x_2$ & $x_3$ & $x_4$ & $x_5$ & range in \cite{SkalnaHla19}\\ \hline %
$O_s$, {\small pKRank1} & 0.266 & 0.189 & 0.186 & 0.113 & 0.028 \\ %
$O_s$, {\small PDM}     & 0.05 & 0 & 0 & 0 & 0 & 0.0--0.26 \\   \hline %
\% overest.             & 24.3 & 27.7 & 28.9 & 31.9 & 35.2 \\ \hline %
\end{tabular}
} \caption{Sharpness $O_s$ of pKRank1 and PDM for the bounds of the
solution set in Example \ref{Okum}, $\delta = 0.25$,   and the
percentage by which ${\rm PDM}_{\rm out}$ overestimates ${\rm
pKRank1}_{\rm out}$.} \label{Table25}
\end{table}

%%%%%%%%%%%%%%%%%%%%%%%%%%%%%%%%%%%%%%%%%%%%%%%%%%%%%%%%%%%%%%%
{\bf 4. On an application of pKRank1.} In \cite[Section
3]{Kolev2018} Kolev proposes to apply the parameterized solution of
type Theorem \ref{pl-KolevStyle} for determining $\square\Sigma^p$
of parametric systems involving rank one interval parameters. In
this section we consider such an application in more details and
demonstrate that this might be dangerous.

Let (\ref{plsG}) be the equivalent, optimal rank one, representation
of (\ref{pls}), which involves only rank one interval parameters.
Let  $A(\check{p})$ be nonsingular. Denote $C=A^{-1}(\check{p})$,
$\check{x}=Ca(\check{p})$, $\check{y}=R\check{x}$, and let
(\ref{strReg2}) hold true. Let $i$ be arbitrary, $1\leq i\leq n$,
and let $\left(\Sigma_{uni}((\ref{pls}))\right)_i$ be monotone with
respect to each parameter $p_k$, $k\in\pi = ((\pi'')^\top ,
(\pi')^\top)^\top$, so that
$$\left(\square\Sigma_{uni}((\ref{pls}))\right)_i =
[x_{*,i}^-, x_{*,i}^+] = \left[x_i(p^{-s_i}), x_i(p^{s_i})\right]$$
for an $s_i\in\{-1,1\}^K$ ($|s_i|=1\in\R^K$), where $-1$ means
decreasing and $1$ -- increasing. In order to simplify the notation,
in what follows we will omit the subscript in $s_i$. Denote
$s_i^\top = s^\top =(s'',s')$. We consider $\inum{x}$ in Theorem
\ref{solution1} as an interval evaluation of the function
\begin{eqnarray*}
x(p) &=& \check{x}-(CF)(\check{p}_{\pi''}-p_{\pi''})
+(CL)\left(D_{\check{p}_{\pi'}-p_{\pi'}}(y(p)-t)\right),
\end{eqnarray*}
where $y(p)$ is the solution of the system (\ref{eqY}). Replacing in
this function the two endpoint vectors $p^{-s}$, respectively
$p^{s}$, we obtain
\begin{eqnarray*}
x_i(p^{-s}) &=&
\check{x}_i-C_{i\bullet}F(\check{p}_{\pi''}-p^{-s''}_{\pi''})
+C_{i\bullet}LD_{\check{y}-t}\,(\check{p}_{\pi'}-p_{\pi'}^{-s'}) + C_{i\bullet}LD_{y(\check{p} -p^{-s})-\check{y}}(\check{p}_{\pi'}-p_{\pi'}^{-s'})\\ %
x_i(p^{s}) &=&
\check{x}_i-C_{i\bullet}F(\check{p}_{\pi''}-p^{s''}_{\pi''})
+ C_{i\bullet}LD_{(\check{y}-t)}(\check{p}_{\pi'}-p_{\pi'}^{s'}) + C_{i\bullet}LD_{y(\check{p}-p^{s})-\check{y}}\,(\check{p}_{\pi'}-p_{\pi'}^{s'}). %
\end{eqnarray*}
In order to simplify the presentation, we denote $\lambda'' =
C_{i\bullet}F$, $\lambda' = C_{i\bullet}LD_{\check{y}-t}$. %
For $\inum{p}\in\IR$ and $s\in\{-1,1\}$, we have $\inum{p}^{-s} =
\check{p} - s\hat{p}$, $\inum{p}^{s} = \check{p} + s\hat{p}$ and
\begin{eqnarray*}
\check{p} - \inum{p}^{-s} = \check{p} -(\check{p} - s\hat{p}) =
s\hat{p}, \quad &\text{similarly,}& \quad \check{p} - \inum{p}^{s} =
-s\hat{p}.
\end{eqnarray*}
Thus, we have
\begin{eqnarray*}
x_i(p^{-s}) &=& \check{x}_i-\lambda''(s''\hat{p}_{\pi''})
+\lambda'(s'\hat{p}_{\pi'}) +
C_{i\bullet}LD_{y(s\hat{p}_\pi)-\check{y}}\,(s'\hat{p}_{\pi'}),\\ %
x_i(p^{s}) &=& \check{x}_i-\lambda''(-s''\hat{p}_{\pi''})
+\lambda'(-s'\hat{p}_{\pi'}) +
C_{i\bullet}LD_{y(-s\hat{p}_\pi)-\check{y}}\,(-s'\hat{p}_{\pi'}).
\end{eqnarray*}
Now, in order to operate simultaneously with both $p^{-s}$, $p^{s}$,
as well as simultaneously with both $x_i(p^{-s})$, $x_i(p^{s})$, we
use Kaucher interval arithmetic and the relations between proper and
improper intervals. Consider the following interval expression in
$\KR$
\begin{equation} \label{x(ps)}
\check{x}_i -\lambda''(\inum{p}'_{\pi''})_{-s''} + \lambda'(\inum{p}'_{\pi'})_{-s'} %
+  [r_{*,i}^-, r_{*,i}^+],
\end{equation}
where $[r_{*,i}^-, r_{*,i}^+] =
\left[C_{i\bullet}LD_{y(s\hat{p}_\pi)-\check{y}}\,(s'\hat{p}_{\pi'}),
\;
C_{i\bullet}LD_{y(-s\hat{p}_\pi)-\check{y}}\,(-s'\hat{p}_{\pi'})\right]$
and $\inum{p}'_\pi = [-\hat{p}_\pi, \hat{p}_\pi]$. (\ref{x(ps)}) is
equivalent to
\begin{equation} \label{x(ps)2}
\check{x}_i -|\lambda''|(\inum{p}'_{\pi''})_{-s''s_{\lambda''}} + |\lambda'|(\inum{p}'_{\pi'})_{-s's_{\lambda'}} %
+  [r_{*,i}^-, r_{*,i}^+].
\end{equation}
If
\begin{eqnarray} \label{cond}
s_{\lambda''}:={\rm sign}(\lambda'') = -s'' &\text{and}&
s_{\lambda'}:={\rm sign}(\lambda') = -s',
\end{eqnarray}
then
\begin{eqnarray*}
-\lambda''(\inum{p}'_{\pi''})_{-s''} &=&
-s_{\lambda''}|\lambda''|(\inum{p}'_{\pi''})_{-s''} =
-|\lambda''|(\inum{p}'_{\pi''})_{-s''s_{\lambda''}}
= -|\lambda''|(\inum{p}'_{\pi''}) \\
&=& -[-|\lambda''|\hat{p}_{\pi''},
|\lambda''|\hat{p}_{\pi''}]\stackrel{(\ref{cond})}{=}
-[s''\lambda''\hat{p}_{\pi''}, -s''\lambda''\hat{p}_{\pi''}],
\end{eqnarray*}
similarly $\lambda'(\inum{p}'_{\pi'})_{-s'} =
[\lambda's'\hat{p}_{\pi'}, -\lambda's'\hat{p}_{\pi'}]$. Thus,
(\ref{x(ps)}), (\ref{x(ps)2}), become equivalently
\begin{multline}\label{equal}
\check{x}_i -[\lambda''(s''\hat{p}_{\pi''}),
\lambda''(-s''\hat{p}_{\pi''})] +
 [\lambda'(s'\hat{p}_{\pi'}), -\lambda'(s'\hat{p}_{\pi'})] %
+  [r_{*,i}^-, r_{*,i}^+] = \\
 \left[x_i(p^{-s_i}), x_i(p^{s_i})\right].
\end{multline}
Thus, by (\ref{cond}), (\ref{x(ps)2}) is equivalent to \quad
$\check{x}_i -\lambda''\inum{p}'_{\pi''} +
\lambda'\inum{p}'_{\pi'} + [r_{*,i}^-, r_{*,i}^+]$. \\
Now, we compare (\ref{x(ps)2}) to $x_i(\inum{p},\inum{r})$, where
$x(p,r)$ is the parameterized solution from Theorem
\ref{pl-KolevStyle}. The coefficients $\lambda''$, $\lambda'$ are
the same in both expressions. Consider three cases.
\begin{itemize}
\item
Obviously, under (\ref{cond}), the first three terms in the two
expressions are equivalent.

\item
If for some $k\in\pi$, $\lambda_k =0$, then the equality relation
(\ref{equal}) is preserved and the equivalence between the first
three terms in  (\ref{x(ps)2}) and $x_i(\inum{p},\inum{r})$ is also
preserved. However,  $s_k\neq s_{\lambda_k}$ and $s_k$ cannot be
inferred from $\lambda_k$.

\item
If for some $k\in\pi$, $0\neq s_{\lambda_k} \neq -s_k$, the equality
relation (\ref{equal}) turns into inclusion (due to
$|\lambda_k|\left(\inum{p}'_k\right)_- \subseteq
|\lambda_k|\left(\inum{p}'_k\right)$), which contradicts to the
initial assumption. In this case, the first three terms in
(\ref{x(ps)2}) and $x_i(\inum{p},\inum{r})$ are equivalent but $s_k$
also cannot be inferred from $\lambda_k$.
\end{itemize}

Thus, we have proven the following theorem.
\begin{theorem}\label{thmMon1}
Let (\ref{plsG}) be the equivalent, optimal rank one, representation
of the system (\ref{pls}), which involves only rank one interval
parameters. Let the matrix $A(\check{p})$ be nonsingular. Denote
$C=A^{-1}(\check{p})$, $\check{x}=Ca(\check{p})$,
$\check{y}=R\check{x}$, and let the condition (\ref{strReg2}) hold
true. If for any $i$, $1\leq i\leq n$,
$\left(\Sigma_{uni}((\ref{pls}))\right)_i$ is monotone with respect
to each parameter $p_k$, $k\in\pi = ((\pi'')^\top ,
(\pi')^\top)^\top$, with type of monotonicity specified by the sign
vector $s_i$, and  if ${\rm sign}\left(CF, \;
CLD_{\check{y}-t}\right)_{i\bullet} = s_i$, then the parameterized
solution defined in Theorem \ref{pl-KolevStyle} can be used for
determining $\square\left(\Sigma_{\rm uni}((\ref{pls}))\right)_i$.
\end{theorem}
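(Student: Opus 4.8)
The plan is to compute the hull component $\left(\square\Sigma_{uni}((\ref{pls}))\right)_i$ directly from the exact solution function and then to recognise the result inside the interval evaluation of the parameterized solution of Theorem \ref{pl-KolevStyle}. First I would use the monotonicity hypothesis to localise the two endpoints: since $\left(\Sigma_{uni}((\ref{pls}))\right)_i$ is monotone in each $p_k$, $k\in\pi$, with monotonicity type $s_i$, the lower and upper hull values are attained at the opposite vertices $p^{-s_i}$ and $p^{s_i}$ of $\inum{p}$, whence $\left(\square\Sigma_{uni}((\ref{pls}))\right)_i = \left[x_i(p^{-s_i}), x_i(p^{s_i})\right]$, with $y(p)$ the exact solution of (\ref{eqY}). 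Here it is essential that condition (\ref{strReg2}) guarantees boundedness and that the representation is optimal rank one, so that $x_i(p)$ is the genuine $i$-th solution component.

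Next I would split the diagonal factor as $y(p)-t=(\check{y}-t)+(y(p)-\check{y})$, separating from $x_i(p)$ an affine-linear part in $p$, with coefficients $\lambda''=C_{i\bullet}F$ and $\lambda'=C_{i\bullet}LD_{\check{y}-t}$ that coincide with the linear coefficients of the parameterized solution, plus a remainder part carrying the coupling $y(p)-\check{y}$. Substituting the vertex identities $\check{p}-\inum{p}^{-s}=s\hat{p}$ and $\check{p}-\inum{p}^{s}=-s\hat{p}$ yields explicit closed forms for $x_i(p^{-s})$ and $x_i(p^{s})$. To handle both endpoints at once I would then pass to Kaucher arithmetic and encode the pair in the single expression (\ref{x(ps)}); rewriting each coefficient as $\lambda=s_\lambda|\lambda|$ converts it into (\ref{x(ps)2}), in which the proper/improper orientation of each parameter interval $\inum{p}'$ is controlled by the product of the monotonicity sign and the coefficient sign.

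The decisive step is the sign-matching hypothesis, i.e.\ condition (\ref{cond}): precisely when the signs of $\lambda''$ and $\lambda'$ stand in the required relation to $s''$ and $s'$, every orientation factor collapses to the ordinary proper interval, and a direct computation in $\KR$ shows that (\ref{x(ps)2}) reduces exactly to the proper interval $\left[x_i(p^{-s_i}), x_i(p^{s_i})\right]$; this is the equality (\ref{equal}). I would then compare the affine-linear part of this expression with the corresponding part of $x_i(\inum{p},\inum{r})$ from Theorem \ref{pl-KolevStyle}: the coefficients $\lambda''$, $\lambda'$ agree and, under (\ref{cond}), the vertex selection dictated by their signs coincides with the monotonicity vertices, so the two linear parts are identical and the symmetric remainder $\inum{r}$ absorbs the coupling term correctly. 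Hence the parameterized solution reproduces $\left(\square\Sigma_{uni}((\ref{pls}))\right)_i$, which is the assertion.

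I expect the principal obstacle to be the sign bookkeeping in $\KR$: one must verify that flipping the orientation of a parameter interval through the operation $\dual{\cdot}$ corresponds exactly to exchanging the two monotonicity endpoints, and that this correspondence holds simultaneously over all coordinates $k\in\pi$. The delicate points, which also explain why the hypothesis cannot be relaxed in general, are the degenerate case $\lambda_k=0$ and the contradictory case $0\neq s_{\lambda_k}\neq -s_k$: in each of these the affine-linear parts still match, so (\ref{equal}) either persists or weakens to an inclusion, yet $s_k$ can no longer be recovered from $\lambda_k$, so the monotonicity direction cannot be read off the parameterized solution and the method loses its justification.
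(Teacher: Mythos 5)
Your proposal follows the paper's own derivation essentially step for step up to and including the equality (\ref{equal}): locating the hull endpoints at the vertices $p^{\mp s_i}$ via monotonicity, the splitting $y(p)-t=(\check{y}-t)+(y(p)-\check{y})$ that isolates the coefficients $\lambda''=C_{i\bullet}F$ and $\lambda'=C_{i\bullet}LD_{\check{y}-t}$, the Kaucher encoding (\ref{x(ps)})--(\ref{x(ps)2}) of the two vertex evaluations, the sign condition (\ref{cond}), and the two exceptional cases $\lambda_k=0$ and $0\neq s_{\lambda_k}\neq -s_k$. The gap is in your closing inference. You assert that ``the symmetric remainder $\inum{r}$ absorbs the coupling term correctly,'' so that the parameterized solution of Theorem \ref{pl-KolevStyle} \emph{reproduces} $\square\left(\Sigma_{\rm uni}((\ref{pls}))\right)_i$, i.e.\ that its interval evaluation equals the hull component. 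That is false, and the paper explicitly warns against it immediately after the theorem: the exact remainder $[r_{*,i}^-,r_{*,i}^+]$ is built from the true vertex solutions $y(s\hat{p}_\pi)$, $y(-s\hat{p}_\pi)$ of (\ref{eqY}) and is \emph{not} symmetric in general, whereas $\hat{r}=|CL|D_{|\inum{y}-\check{y}|}g(\hat{p}_{\pi'})$ is built from an outer enclosure $\inum{y}$ of the solution set of (\ref{eqY}); the two differ, so $x_i(\inum{p},\inum{r})$ is in general a strict overestimation of the hull. Indeed, by Theorem \ref{pl-KolevStyle} (ii) this interval evaluation equals $\inum{x}_i$ of Theorem \ref{solution1}, which is only an enclosure of the solution set and is never claimed to be its hull.

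What the theorem actually asserts --- and what the paper's derivation proves --- is operationally different: under the sign hypothesis, the coefficient signs identify the monotonicity vector $s_i$, and the hull endpoints $x_i(p^{-s_i})$, $x_i(p^{s_i})$ are then computed \emph{separately}, by solving the two real point systems (\ref{pls}) at $p=p^{-s_i}$ and $p=p^{s_i}$ (or the equivalent centered systems); the parameterized solution serves only to read off the correct vertices. Your argument never takes this step, precisely because you believe the interval evaluation itself already yields the hull. A second, smaller slip: the matrix whose row signs must equal $s_i$ is $\left(CF,\;CLD_{\check{y}-t}\right)$, which, as the paper notes, differs in its $\pi''$ block from the coefficient matrix $\left(-CF,\;CLD_{\check{y}-t}\right)$ of the parameterized solution itself; so the vertex selection is not literally ``dictated by the signs'' of the parameterized solution's coefficients, and conflating the two matrices introduces a spurious sign flip on the parameters indexed by $\pi''$.
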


It follows from (\ref{equal})  that with given $s_i$,
$\square\left(\Sigma_{\rm uni}((\ref{pls}))\right)_i$ can be
obtained by solving (\ref{pls}) for $\inum{p}^{-s_i}$, respectively,
for $\inum{p}^{s_i}$, $\inum{p}\in\IR^K$, or by solving the
equivalent centered system
$$\left(A(\check{p}) -LD_{p'_{\pi'}}R\right)x = a(\check{p}) -Fp'_{\pi''} - LD_{p'_{\pi'}}t, \qquad p'\in [-\hat{p},\hat{p}]$$
for $p'=s_i\hat{p}$, respectively, for $p'=-s_i\hat{p}$, that is
$(\inum{p}')^{s_i}$, resp., $(\inum{p}')^{-s_i}$.

Note, that the matrix $\left(CF, \; CLD_{\check{y}-t}\right)$ is
different from the matrix $\left(-CF,\right.$
$\left.CLD_{\check{y}-t}\right)$ in the parameterized solution of
Theorem \ref{pl-KolevStyle}. Note also, that the interval
$[r_{*,i}^-, r_{*,i}^+]\in\IR$ is not symmetric in general and
differs from the symmetric interval $[-r,r]$ in Theorem
\ref{pl-KolevStyle}.

By Proposition \ref{NP=sol1}, it follows that (for rank one
uncertainty structure of the system) the parameterized solution
obtained by the method of \cite{Kolev2018} (based on affine
arithmetic) will have the same signs of the parameter coefficients
as the parameterized solution of Theorem \ref{pl-KolevStyle}. The
example, considered in \cite{Kolev2018}, illustrates the first case
($s_{\lambda} = s_i$) in the proof of Theorem \ref{thmMon1}. By the
following example we illustrate the last case ($0\neq s_{\lambda_k}
\neq -s_k$) in the proof of Theorem \ref{thmMon1}, which implies
that the parameterized solution of type Theorem \ref{pl-KolevStyle}
cannot be used in general for determining the hull solution to
interval parametric linear systems involving rank one parameters.

\begin{example}\rm
Consider the parametric linear system
$$
\begin{pmatrix}1&\frac{1}{4}+p_{12}&p_{22}\\ \frac{1}{4}+p_1&2&\frac{1}{4}+p_1\\ p_2&\frac{1}{4}+p_{12}&3\end{pmatrix}x =
\begin{pmatrix}-\frac{5}{2} - p_3\\ \frac{8}{3}+\frac{p_3}{3}\\ -\frac{9}{4} +
\frac{p_3}{2}\end{pmatrix}, \qquad \begin{matrix}p_1,p_{12}\in
[-\frac{3}{4},\frac{3}{4}],\\p_2,p_{22}\in [-\frac{1}{2},\frac{1}{2}],\\
p_3\in [-\frac{1}{2},\frac{1}{2}].\end{matrix}
$$
The coefficient matrix for everyone of the parameters has rank one.
Therefore, by \cite[Corollary 3]{Pop2014CAMWA} the parametric united
solution set has linear boundary and its interval hull is obtained
for particular endpoints of the parameter intervals. Table
\ref{exMon} (right) presents global monotonicity (single entry) or
local monotonicity (two entries respectively for the lower and the
upper bounds) type of the parametric solution set with respect to
interval parameters.
% These monotonicity types imply which parameter endpoints generate the interval hull solution. %
An equivalent representation of the system is defined by
\begin{equation*}
L=\begin{pmatrix}0&1&0&1\\1&0&0&0\\0&1&1&0\end{pmatrix},\quad
R=\begin{pmatrix}1&0&1\\0&1&0\\1&0&0\\0&0&1\end{pmatrix}, \quad
\begin{matrix}g(p)= (p_1,p_{12},p_2, p_{22})^\top,\\ F=(-1, \frac{1}{3},  \frac{1}{2})^\top, \\
t=0.\end{matrix}
\end{equation*}
Applying Theorem \ref{pl-KolevStyle} we obtain a parameterized
solution with reminder term  $x(p,r) = \check{x}+ U (p_3,
p_1,p_{12},p_2, p_{22})^\top + r$,  where $r\in [-\hat{r},\hat{r}]$
and
\begin{eqnarray*}
U&=&\begin{pmatrix}
 1.07065 & 0.502836 & 1.89414 & -0.0321066 & -0.930657\\
 -0.282609 & -2.01134 & -0.31569 & 0.128426, & 0.117557\\
-0.143116 & 0.167612 & 0.63138 & -0.995304 & -0.00979639
\end{pmatrix}, \\
\check{x}&=&\left(-2.9538, 1.81522, -0.901268\right)^\top, \quad
\hat{r}=\left(52.7807, 39.2595, 22.8547\right)^\top.
\end{eqnarray*}

\begin{table}[h]
\begin{tabular}{lccccc}
\hline  & $p_3$ & $p_1$ & $p_{12}$ & $p_2$ & $p_{22}$ \\
\hline
$x_1$& -1 & 1 & 1 & -1 & -1 \\
$x_2$& 1 & -1 & -1 & 1 & 1 \\
$x_3$& 1 & 1 & 1 & -1 & -1
\end{tabular} \qquad\qquad
\begin{tabular}{lccccc}
\hline
     & $p_3$ & $p_1$ & $p_{12}$ & $p_2$ & $p_{22}$ \\ \hline
$x_1$& 1, -1 & 1     & 1, -1    & -1    & -1, 1 \\
$x_2$&  1    & -1, 1  & 1        & -1    & -1 \\
$x_3$&  1    & 1     & 1, -1    & -1, 1 & -1, 1
\end{tabular}
\caption{Monotonic dependence of the solution components on the
interval parameters. Left: sign$(U)$ in the parameterized solution;
Right: true dependence.}\label{exMon}
\end{table}
The sign of matrix $U'=\left(CF, \; CLD_{\check{y}-t}\right)$ is
presented in Table \ref{exMon} (left). Comparing the left and right
tables of monotonicity, it is clear that the sign of matrix $U'$
does not represent the true monotonic dependence. Furthermore, no
one of the elements of $U, U'$ is zero. Applying the monotonicity
defined by sign$(U')$ we obtain an interval vector, which is
contained in $\square\Sigma^p$. Therefore, using sign$(U')$ is
dangerous.
\end{example}

{\bf Acknowledgements} This work is partly supported by the National
Scientific Program ``Information and Communication Technologies for
a Single Digital Market in Science, Education and Security
(ICTinSES)'', contract No DO1-205/23.11.2018, financed by the
Ministry of Education and Science in Bulgaria.

\small

\bigskip

\normalsize

\noindent Institute of Mathematics and Informatics,
Bulgarian Academy of Sciences  \\
Acad. G. Bonchev str., block 8, 1113 Sofia, Bulgaria       \\
email: {\tt epopova@math.bas.bg}

 \end{document}